\newtheorem{theorem}{Theorem}[section]
\newtheorem{thmx}{Theorem}
\newtheorem{lemma}[theorem]{Lemma}
\newtheorem{question}[theorem]{Question}		
\theoremstyle{definition}
\theoremstyle{remark}
\newtheorem{remark}[theorem]{Remark}
\newcommand{\Z}{\mathbb{Z}}
\newcommand{\E}{\mathcal{E}}
\newcommand{\Ef}{\mathcal{E}_{free}}
\newcommand{\G}{\mathfrak{G}}
\newcommand{\K}{\mathcal{K}}
\newcommand{\Sym}{\operatorname{Sym}}
\def\hght{\mathrm{ht}}
\def\aut{\mathrm{aut}}
\def\id{\mathrm{id}}
\begin{document}

\title[Permutation modules]{Realization of permutation modules \\via Alexandroff spaces}

\author[C. Costoya]{Cristina Costoya}
\address{CITMAga, Departamento de Matem\'aticas,
Universidade de Santiago de Compostela, 15705-A Coru{\~n}a, Spain.}
\email{cristina.costoya@usc.es}

\author[R. Gomes]{Rafael Gomes}
\address{Departamento de \'Algebra, Geometr\'{\i}a y Topolog\'{\i}a, Universidad de M\'alaga,
Campus de Teatinos, s/n, 29071-M\'alaga, Spain}
\email{rmgomes@uma.es}

\author[A. Viruel]{Antonio Viruel}
\address{Departamento de \'Algebra, Geometr\'{\i}a y Topolog\'{\i}a, Universidad de M\'alaga,
Campus de Teatinos, s/n, 29071-M\'alaga, Spain}
\email{viruel@uma.es}

\thanks{The first author was supported by MINECO (Spain) grant PID2020-115155GB-I00.
The second and third authors were supported by MINECO (Spain) grant PID2020-118753GB-I00. The second author was also supported by Fundação para a Ciência e Tecnologia (Portugal) grant 2021.04682.BD}

\subjclass{20B25, 20C05, 06A06, 06A11, 05E18, 55P10}
\keywords{Permutation modules, automorphisms, graphs, posets, homotopy equivalences}

\begin{abstract}

We raise the question of the realizability of permutation modules in the context of Kahn's realizability problem for abstract groups and the $G$-Moore space problem. Specifically, given a finite group $G$, we consider a collection $\{M_i\}_{i=1}^n$ of finitely generated $\Z G$-modules that admit a submodule decomposition on which $G$ acts by permuting the summands. Then we prove the existence of connected finite spaces $X$ that realize each $M_i$ as its $i$-th homology, $G$ as its group of self-homotopy equivalences $\E(X)$, and the action of $G$ on each $M_i$ as the action of $\E(X)$ on $H_i(X; \Z)$.

\end{abstract}

\maketitle

\section{Introduction}

Realizability questions represent a classical branch of problems in Algebraic Topology. Two widely known examples are the \textit{realizability problem for abstract groups} proposed by D. Kahn \cite{kahn}, which seeks to characterise the groups that appear as the group of self-homotopy equivalences of  simply-connected spaces, and the \textit{$G$-Moore space problem}, $G$ a group, proposed by Steenrod \cite[Problem 51]{lashof}, which asks for $\Z G$-modules that emerge as the homology of  simply-connected $G$-Moore spaces. The former has been solved for finite groups \cite{CV},  and remains open in its full generality. Nonetheless, for non-simply-connected spaces, it has been been successfully addressed for arbitrary groups through Alexandroff spaces \cite[Theorem 1.3, Theorem 1.4]{chocano}. Concerning the latter problem, it has been demonstrated that not all $\Z G$-modules are realisable \cite{carlsson}.

In this paper, we raise the following question that  combines both of the above mentioned problems, and can thought of as the dual of \cite[Question 1.3]{CV6}:


\begin{question} \label{Question}
For a given group $G$, decide for which $\Z G$-modules $M$ there exists a topological space $X$ such that
\begin{enumerate}[label={\rm (\roman{*})}]
    \item $H_\ast(X;\Z) \cong M$;
    \item the group of self-homotopy equivalences $\E(X)$ is isomorphic to $G$;
    \item  the action of $G$ on $M$ is equivalent to the canonical action of $\E(X)$ on $H_\ast(X;\Z)$.
\end{enumerate}
\end{question}

In this paper, we provide a positive answer to this question in the particular case where $G$ is a finite group and  $M$ is a finitely generated \textit{generalized permutation module},
that is, there exists a submodule decomposition  $M=\bigoplus\limits_{v\in V} M_v$, where $V$ is a finite set,  on which $G$ acts as a permutation group on the summands as described by some permutation representation $\rho\colon G\to \Sym(V)$ \cite[pag.\ 686]{Kar}.  We prove the following result:
\begin{thmx}\label{thm:main}
Let $n$ be a positive integer, and $G$ be a finite group. If $\{M_i\}_{i=1}^n$ is a collection of finitely generated permutation $\Z G$-modules, then there exist infinitely many non homotopically equivalent connected finite spaces $X$ such that
\begin{enumerate}[label={\rm (\roman{*})}]    \item\label{thm:main_1} $\Ef(X)=\E(X)\cong G$;
    \item\label{thm:main_2} $H_i(X;\Z)=0$ for $i>n$;
    \item\label{thm:main_3} the pair $(\E(X),H_i(X;\Z))$ is equivalent to $(G, M_i)$ for $i=1,\ldots, n$.
\end{enumerate}

\end{thmx}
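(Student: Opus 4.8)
The plan is to work entirely inside the category of finite $T_0$-spaces, exploiting the McCord weak equivalence $X\to|\mathcal{K}(X)|$ and Stong's rigidity theorem for minimal finite spaces. The first step is a homological reduction: decomposing each permutation module into $G$-orbits, $M_i\cong\bigoplus_j\Z[G/H_{ij}]$, and using that reduced homology is additive over pieces glued along contractible subconfigurations (Mayer--Vietoris), it suffices to realize each transitive summand $\Z[G/H]$ as the $i$-th reduced homology of a single ``gadget'', to attach all gadgets equivariantly to a common acyclic base, and to verify that the gluing loci contribute nothing to homology.

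For the gadget realizing $\Z[G/H]$ in degree $i$, I would take $|G/H|$ copies of the finite sphere model $\bS^i$ and let $G$ act by permuting them through the coset representation $\rho\colon G\to\Sym(G/H)$; attached suitably, McCord identifies the corresponding subspace with $\bigvee_{gH}S^i$ up to weak equivalence, so it contributes $\Z[G/H]$ to $H_i$ with exactly the permutation action and nothing in higher degrees. The obstacle is that such sphere-orbits are highly non-rigid, so the heart of the argument is to attach them to a connected, acyclic, homotopically rigid finite base $B$ with $\operatorname{Aut}(B)\cong\E(B)\cong G$ (the kind of rigid model supplied by the Alexandroff-space solution to Kahn's problem), gluing the $G/H$-orbit of spheres along an $H$-invariant contractible subconfiguration so that the $G$-action on $B$ extends and permutes the spheres exactly as $G/H$, with no new symmetry created. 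The delicate work is to perform all these attachments, across the degrees $i=1,\dots,n$ and orbits $j$, labelled so that distinct orbits and distinct degrees cannot be interchanged, without creating beat points or accidental symmetries and while keeping $B$ acyclic.

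After assembling the finite poset and passing to its core (which preserves both the weak homotopy type and $\E$), the group $\E(X)$ becomes computable combinatorially: by Stong's theorem a self-map of a minimal finite space homotopic to the identity is the identity, so every self-homotopy equivalence is a homeomorphism and $\E(X)\cong\operatorname{Aut}(X)$. The rigidification is engineered so that $\operatorname{Aut}(X)\cong G$: any poset automorphism must preserve the distinguished acyclic part $B$, hence restrict to an element of $\operatorname{Aut}(B)\cong G$, which in turn dictates the permutation of the attached sphere-orbits, while every element of $G$ extends over the equivariant attachments; the rigid attachment also breaks the internal symmetries of each $\bS^i$. Since the induced action of $\operatorname{Aut}(X)=G$ on $H_i(|\mathcal{K}(X)|)$ is, by functoriality, the summand-permutation, this identifies $(\E(X),H_i(X;\Z))$ with $(G,M_i)$, giving \ref{thm:main_2} and \ref{thm:main_3}; the coincidence $\Ef(X)=\E(X)$ in \ref{thm:main_1} follows from the same rigidity, the defining property of $\Ef$ being built into the construction.

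To obtain infinitely many pairwise non-homotopy-equivalent realizations, I would attach equivariantly, along a $G$-orbit of points of $B$, rigid acyclic finite spaces of strictly increasing size and trivial automorphism group: being acyclic they leave the homology unchanged, being rigid they force any automorphism to permute their copies exactly as the chosen orbit (already realized inside $G$), so $\operatorname{Aut}(X)=G$ is preserved, yet they strictly enlarge the core, rendering the cores pairwise non-homeomorphic and the spaces pairwise non-equivalent. The principal difficulty throughout is the rigidity bookkeeping of the middle step: forcing $\operatorname{Aut}(X)$ to be neither larger than $G$ (no accidental symmetries among spheres or base) nor smaller (each $g\in G$ genuinely realized), while pinning the action on every $H_i$ to the prescribed permutation representation rather than to some twist of it.
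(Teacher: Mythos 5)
Your overall architecture matches the paper's: a rigid, weakly contractible minimal finite base with $\aut\cong\E\cong G$ acting on a distinguished set of points by the prescribed permutation representation, homology gadgets wedged on at those points, Stong's theorem to identify $\E(X)$ with $\aut(X)$, and extra rigid acyclic data to distinguish infinitely many models. But there is a genuine gap at the gadget step. Your gadgets are wedges of finite sphere models $\bS^i$, and these are far from rigid: the minimal finite model of $S^i$ has a large automorphism group, and many of those automorphisms fix any chosen wedge point, so wedging such a model onto a rigid base at a single point does \emph{not} ``break the internal symmetries'' --- $\aut(X)$ would strictly contain $G$. This is not bookkeeping you can defer; it is the central difficulty, and the paper resolves it by invoking \cite[Corollary 1.4]{chocano1}, which supplies finite minimal Moore spaces that are \emph{already rigid} ($\E(Y)=\aut(Y)=\{1\}$) with prescribed homology concentrated in a single degree. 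Without that input, or an equivalent rigidification of each sphere model (itself a nontrivial construction), the identification $\aut(X)\cong G$ fails.

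Two further points. Your reduction of each $M_i$ to transitive pieces $\Z[G/H_{ij}]$ only covers permutation modules in the narrow sense; the theorem, as the introduction makes explicit, concerns \emph{generalized} permutation modules $M_i=\bigoplus_{v\in V_i}(M_i)_v$ whose summands are arbitrary finitely generated abelian groups permuted by $G$, and wedges of $i$-spheres can only produce free summands --- the rigid Moore spaces of the cited corollary handle arbitrary $(M_i)_v$ and close this gap too. Finally, the claim that every automorphism of $X$ preserves the base $B$ is asserted without a mechanism; the paper earns it with a quantitative height argument (choose $\hght(Z)$ strictly larger than the heights of all gadgets, arrange that every minimal point of $Z$ lies on a chain of maximal length, then induct on gadget heights to prove $f(V)=V$ and $f(Z)=Z$). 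Some such device is needed, since nothing in a bare wedge construction forces an automorphism to respect the decomposition into base and gadgets.
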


The strategy to establish Theorem \ref{thm:main} mirrors the approach developed by the first and third authors to provide an affirmative answer to the finite case of Kahn's realizability problem \cite{CV}. The initial step involves realizing permutation representations within the context of graphs using results from \cite{CMV1, Frucht1}. Subsequently, we proceed to construct topological spaces that translate this solution to the topological setting. In contrast to the rational homotopy techniques employed in \cite{CV}, we adopt the use of Alexandroff spaces in this current work.

We recall that \textit{Alexandroff spaces} are topological spaces for which the intersection of an arbitrary number of open sets remains open. It is well known that there exists an isomorphism of categories between the category of $T_0$ Alexandroff spaces, with continuous maps, and the category of partially ordered sets with order-preserving maps. Henceforth, we shall assume that all spaces are $T_0$, and we shall make no distinction between an Alexandroff space and a poset.
The theory of Alexandroff spaces, particularly concerning finite spaces, has gained considerable popularity in recent years, thanks to May's REU programs \cite{may} and Barmak's work \cite{barmak}. Additionally, as we have already mentioned, in \cite{chocano} and \cite{chocano1}, Chocano, Morón, and Ruiz del Portal have effectively employed Alexandroff spaces in this context of realizability problems.  We draw inspiration from their ideas, and previous work from the first and third author.

This paper is organized as follows. Section \ref{section_graphs} focuses on realizing permutation representations as automorphisms of graphs. Moving on to Section \ref{sect:acyclic}, we establish a connection between graphs and their incidence posets, which enables us to translate the results from the previous section into the topological setting. Specifically, we demonstrate the realization of permutation representations as the automorphisms of acyclic finite spaces with arbitrary large height.  Finally, in Section \ref{sect:proof_main} we prove Theorem \ref{thm:main} by combining the results from Section \ref{sect:acyclic} and the realizability results in \cite{chocano1}.


\section{Graphs and permutation representations} \label{section_graphs}

We consider simple, undirected, and connected graphs. For a graph $\mathcal{G}$, we denote $V(\mathcal{G})$ as the vertex set and $E(\mathcal{G})$ as the edge set. The aim of this section is to refine certain results from \cite[Section 3]{CMV}, and prove the following theorem:

\begin{theorem}\label{thm:graph_realizing_permutation}
Let $G$ be a finite group, and let $\rho:G\to \Sym(V)$ be a not necessarily faithful permutation representation where $V$ is a finite non-empty set. Then there exist infinitely many non-isomorphic finite connected graphs $\mathcal{G}$ such that the following hold:
\begin{enumerate}[label={\rm (\roman{*})}]
    \item\label{thm:graph_1} $V\subset V(\mathcal{G})$ and $V$ is invariant under the $\aut(\mathcal{G})$-action;
    \item\label{thm:graph_2} Every vertex in $\mathcal{G}$ has degree at least $2$;
    \item\label{thm:graph_3} $G \cong \aut(\mathcal{G})$;
    \item\label{thm:graph_4} The restriction $G\cong\aut(\mathcal{G})\to \Sym(V)$ is precisely $\rho$.
\end{enumerate}
\end{theorem}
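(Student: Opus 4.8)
The plan is to combine the classical Frucht-type realization of $G$ as a graph automorphism group with an explicit equivariant attachment that encodes the permutation action $\rho$ on a distinguished vertex subset. First I would decompose $V$ into its $G$-orbits, $V=\bigsqcup_j O_j$, and fix for each orbit an identification $O_j\cong G/H_j$, where $H_j=\mathrm{Stab}(v_j)$ is a point stabilizer, so that $\rho$ restricted to $O_j$ is the coset action. Next I would invoke the Frucht-type constructions of \cite{Frucht1, CMV1} to produce a connected \emph{frame} graph $\mathcal{F}$ with $\aut(\mathcal{F})\cong G$ carrying a recognizable orbit $P=\{p_g\mid g\in G\}$ on which $G$ acts simply transitively by left translation $p_x\mapsto p_{ax}$; here ``recognizable'' means that the local structure of the frame (the asymmetric gadgets encoding a generating set) forces every automorphism of $\mathcal{F}$ to permute $P$ precisely by a left translation. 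Crucially, since this action is faithful, the frame realizes the \emph{full} group $G$ even when $\rho$ has a nontrivial kernel.

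Then, for each orbit $O_j$ and each coset $c=gH_j\in G/H_j$ I would adjoin a new vertex $v_c$, join it to the block $\{p_x\mid x\in c\}\subseteq P$, and also attach to $v_c$ a rigid (asymmetric) rooted gadget $T_j$, choosing the $T_j$ pairwise non-isomorphic across distinct orbits. Setting $V=\{v_c\}$ gives $V\subseteq V(\mathcal{G})$. A left translation by $a\in G$ sends the neighbourhood $\{p_x\mid x\in c\}$ of $v_c$ to $\{p_y\mid y\in ac\}$, the neighbourhood of $v_{ac}$; since distinct cosets of $H_j$ are disjoint, the $P$-neighbourhood determines $v_c$ within its orbit, so the automorphism induced by $a$ extends uniquely by $v_c\mapsto v_{ac}$. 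This identifies the induced action on $O_j$ with the coset action, i.e.\ with $\rho|_{O_j}$, giving (iv); and because the $T_j$ are mutually non-isomorphic and rigid, no automorphism can exchange distinct orbits, so $V$ is invariant and splits into the $O_j$ exactly as prescribed, giving (i). One checks that $\ker\bigl(\aut(\mathcal{G})\to\Sym(V)\bigr)$ consists of those $a$ fixing every coset, i.e.\ $a\in\bigcap_j\mathrm{core}(H_j)=\ker\rho$, as required.

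For the minimum-degree condition (ii), I would observe that each $v_c$ already has degree $|H_j|\geq 1$ from its block neighbours plus at least one edge into $T_j$, hence degree $\geq 2$, while each frame vertex $p_g$ is incident to its generating-set edges and, as $V\neq\emptyset$, to at least one $v_c$; a routine choice of gadgets (with all internal vertices of degree $\geq 2$) and, if necessary, additional equivariant edges in degenerate small cases guarantees that every vertex has degree at least $2$. To obtain infinitely many pairwise non-isomorphic graphs, I would let the rigid tags $T_j$ (or an additional equivariant rigid ``padding'' attached along one orbit) range over an infinite family of rooted asymmetric graphs of strictly increasing size; since enlarging a rigid tag alters neither $\aut(\mathcal{G})\cong G$ nor the action on $V$ but increases $|V(\mathcal{G})|$ without bound, this yields infinitely many non-isomorphic $\mathcal{G}$ all satisfying (i)--(iv).

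The main obstacle is the exact identification $\aut(\mathcal{G})\cong G$ of (iii), that is, proving the attachment introduces no spurious symmetries. This requires three rigidity checks. First, a \emph{type-preservation} argument: the three classes of vertices (frame vertices, the $v_c$, and gadget vertices) must be distinguishable by their local graph-theoretic invariants, so that every automorphism preserves each class. Second, one must show that the restriction of any automorphism to the frame is a left translation---precisely the Frucht-type rigidity of $\mathcal{F}$ that we import---whence the action on the $v_c$ is forced as above and no extra intra-orbit symmetry survives. Third, the pairwise non-isomorphic rigid tags must genuinely obstruct any interchange of distinct orbits $O_j,O_k$, including the delicate case where $H_j$ is conjugate to $H_k$ and the orbits are abstractly isomorphic $G$-sets. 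Establishing these three points, and arranging the gadgets so that all of them hold simultaneously with the degree bound, is where the technical heart of the argument (refining \cite[Section 3]{CMV}) lies.
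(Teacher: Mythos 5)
Your strategy is sound and its core---a Frucht-type frame on which $G$ acts by left translation, with the orbits of $V$ encoded as coset blocks distinguished by rigid tags---is essentially the content of the result the paper simply cites, namely \cite[Theorem 3.1]{CMV} (built from the binary $I$-system of \cite[Definition 3.5]{CMV} via arrow replacement; compare also Bouwer's section graphs \cite{Bou}). Where you genuinely diverge is in how the infinite family and the degree bound are obtained. You propose to vary the rigid tags $T_j$ over an infinite family of asymmetric rooted graphs, which forces you to re-run all three of your rigidity checks for every new choice of tags, and to engineer the minimum degree by hand. The paper instead takes a single graph $\mathcal{G}$ from \cite[Theorem 3.1]{CMV} and forms Cartesian products $\mathcal{G}_j=\mathcal{G}\carprod\mathcal{Q}_j$ with rigid \emph{prime} connected graphs $\mathcal{Q}_j$ not dividing $\mathcal{G}$; the existence of infinitely many such $\mathcal{Q}_j$ is a short separate lemma, and then \cite[Theorem 6.10]{HIK} gives $\aut(\mathcal{G}_j)\cong\aut(\mathcal{G})\times\aut(\mathcal{Q}_j)\cong G$ acting coordinate-wise (so conditions (i) and (iv) transfer verbatim), unique prime factorization gives $\mathcal{G}_j\not\cong\mathcal{G}_{j'}$ for free, and the additivity of vertex degrees in Cartesian products yields condition (ii) with no case analysis. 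This buys a uniform, verification-free infinitude argument at the cost of importing the product-graph machinery; your route is more self-contained but leaves the ``technical heart'' you correctly identify (type preservation, forcing left translations, separating abstractly isomorphic orbits) as acknowledged but uncompleted obligations, and these would have to be discharged uniformly in the tag parameter for your infinitude claim to stand.
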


Let us briefly recall some relevant results that will be needed in the proof of  Theorem \ref{thm:graph_realizing_permutation}.
Following \cite[Section 5.1]{HIK},  for $n$ a positive integer, the \textit{cartesian product} of graphs $\mathcal{G}_1, \mathcal{G}_2, \ldots, \mathcal{G}_n$,  denoted by $\mathcal{G}_1\square \mathcal{G}_2\square\ldots\square \mathcal{G}_n$, has
vertex set $\prod_{j=1}^n V(\mathcal{G}_j)$, and vertices $(x_1,\ldots, x_n)$ and $(y_1,\ldots, y_n)$ are adjacent if $\{x_i,y_i\}\in E(\mathcal{G}_j)$ for exactly one index $1\leq i\leq n$ and $x_j=y_j$ for each index $j\ne i$.

A graph $\mathcal{G}$ is \textit{prime} (with respect to the cartesian product) if it is nontrivial and cannot be expressed as $\mathcal{G}=\mathcal{G}_1\square \mathcal{G}_2$ where $\mathcal{G}_i$ is not isomorphic to $\mathcal{K}_1$, the graph of one isolated vertex. For any connected finite graph $\mathcal{G}$, there exists a unique factorization (up to isomorphism and the order of the factors) of $\mathcal{G}$ as a cartesian product of prime graphs: $\mathcal{G} = \mathcal{P}_1 \square \ldots \square \mathcal{P}_r$ \cite[Theorem 6.6]{HIK}. Furthermore, $\aut(\mathcal{G})$ is fully described in terms of the groups $\aut(\mathcal{P}_i)$ and the isomorphism type of the graphs $\mathcal{P}_i$, $i=1, \ldots, r$ \cite[Theorem 6.10]{HIK}. A graph $\mathcal{G}$ is said  \textit{rigid} if $\aut(\mathcal{G})=\{1\}$.

We now prove that there are infinitely many prime rigid graphs:
 \begin{lemma}\label{lem:rigidos_primos}
There exist infinitely many non-isomorphic prime finite connected simple graphs $\mathcal{P}$ such that $\aut(\mathcal{P})=\{1\}$.
\end{lemma}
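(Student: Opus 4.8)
The plan is to produce the required graphs as \emph{trees}, taking advantage of the fact that trees are automatically prime for the cartesian product. First I would record an elementary observation: if $\mathcal{G}_1$ and $\mathcal{G}_2$ are connected graphs, each different from $\mathcal{K}_1$, then $\mathcal{G}_1\square\mathcal{G}_2$ contains a $4$-cycle. Indeed, picking edges $\{x_1,y_1\}\in E(\mathcal{G}_1)$ and $\{x_2,y_2\}\in E(\mathcal{G}_2)$, the vertices $(x_1,x_2)$, $(y_1,x_2)$, $(y_1,y_2)$, $(x_1,y_2)$ form a $4$-cycle directly from the definition of the product. Hence every nontrivial cartesian product of non-$\mathcal{K}_1$ graphs contains a cycle, so an acyclic graph on at least two vertices admits no such factorization; that is, every finite tree on at least two vertices is prime.

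It then suffices to exhibit infinitely many pairwise non-isomorphic rigid finite trees. For this I would use an explicit family of spiders: for each integer $k\geq 3$, let $\mathcal{P}_k$ be the tree obtained from a central vertex $c$ by attaching three vertex-disjoint paths (legs) of lengths $1$, $2$ and $k$. Each $\mathcal{P}_k$ is finite, connected and simple, has $k+4$ vertices---so the $\mathcal{P}_k$ are pairwise non-isomorphic---and is prime by the first paragraph.

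The core computation is that $\aut(\mathcal{P}_k)=\{1\}$. The vertex $c$ is the unique vertex of degree $3$ (interior leg vertices have degree $2$ and leaves have degree $1$), so any automorphism $\phi$ fixes $c$ and merely permutes the three legs. Since the legs have pairwise distinct lengths $1<2<k$, this permutation must preserve length and is therefore trivial, so $\phi$ sends each leg to itself; being a length-preserving self-map of a path that fixes the end adjacent to $c$, it fixes that leg pointwise. Thus $\phi=\id$ and $\mathcal{P}_k$ is rigid, which finishes the argument.

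The one constraint that does real work here is primality rather than rigidity, and I expect it to be the main obstacle in a naive approach, since a cartesian product of distinct rigid primes is again rigid but not prime; the observation that trees contain no $4$-cycle dispatches it cleanly, after which the rigidity of the spiders is routine degree-and-length bookkeeping. For completeness I would also note a structural alternative that sidesteps explicit models: by \cite[Theorem 6.10]{HIK} every rigid connected finite graph factors as a cartesian product of pairwise non-isomorphic rigid primes, so a finite list of rigid primes would yield only finitely many rigid graphs, contradicting the classical fact that asymmetric graphs exist in arbitrarily large orders.
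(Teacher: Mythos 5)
Your proof is correct, but it takes a genuinely different route from the paper. The paper argues indirectly: by \cite[Theorem 6.10]{HIK} every rigid connected finite graph factors uniquely into pairwise non-isomorphic rigid primes, so finitely many rigid primes could only assemble into finitely many rigid graphs, contradicting Frucht's theorem that infinitely many rigid connected finite graphs exist --- exactly the ``structural alternative'' you sketch in your closing paragraph. You instead exhibit an explicit infinite family: the $4$-cycle observation correctly shows that any nontrivial cartesian product of connected non-$\mathcal{K}_1$ graphs contains a cycle, so trees on at least two vertices are prime, and your spiders with legs of lengths $1$, $2$, $k$ are rigid by the degree-and-leg-length argument (the centre is the unique degree-$3$ vertex, the legs have distinct lengths, and a path automorphism fixing one endpoint is the identity). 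Your construction buys concreteness and self-containedness --- it needs neither Frucht's theorem nor the prime factorization machinery, only the definition of the cartesian product --- while the paper's counting argument is shorter given the machinery it already cites for the proof of Theorem \ref{thm:graph_realizing_permutation}. Both establish the lemma; note also that the leaves of degree $1$ in your trees cause no trouble in the application, since condition \ref{thm:graph_2} of Theorem \ref{thm:graph_realizing_permutation} is obtained there by adding degrees across the cartesian product.
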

\begin{proof}
Let $\mathcal{G}$ be a connected rigid finite graph and let $\mathcal{G}=\mathcal{P}_1\square\ldots\square \mathcal{P}_r$ be its unique prime factorization. By \cite[Theorem 6.10]{HIK}, and since $\aut(\mathcal{G})=\{1\}$, then $\mathcal{P}_i\cong \mathcal{P}_j$ if and only if $i=j$ and moreover $\aut(\mathcal{P}_i)=\{1\}$ for every $i=1,\ldots,r$.

Then, a finite number of connected prime graphs $\mathcal{P}_i$ such that $\aut(\mathcal{P}_i)=\{1\}$ would indeed produce only a finite number of isomorphism types of graphs $\mathcal{G}$ with $\aut(\mathcal{G})=\{1\}$. However, according to \cite{Frucht1}, there exist infinitely many finite connected graphs with trivial automorphism groups. Therefore, there must exist infinitely many prime graphs $\mathcal{P}$ such that $\aut(\mathcal{P})=\{1\}$
\end{proof}

\begin{proof}[Proof of Theorem \ref{thm:graph_realizing_permutation}]

According to  \cite[Theorem 3.1]{CMV}, there exists a  simple and undirected graph $\mathcal{G}$ that satisfies \ref{thm:graph_1}, \ref{thm:graph_3} and \ref{thm:graph_4} which is built upon the  binary $I$-system $\G$ given in \cite[Definition 3.5]{CMV}. As $V(\G)=G\sqcup V$, we obtain that $\G$ is finite. Moreover, every vertex in $G\subset V(\G)$ is connected with every vertex in $V\subset V(\G)$, hence $\G$ is connected.   As described in \cite[Section 3.1]{CMV1}, $\mathcal{G}$ is obtained out of $\G$ by an arrow replacement operation. Therefore, since $\G$ is finite, it has a finite number of labels $I$, and the connected asymmetric graphs $\mathcal{R}_i$, $i\in I$, involved in the replacement operation, can be chosen finite and connected. This shows that the graph $\mathcal{G}$ can be chosen to be connected and finite.


We now prove that $\mathcal{G}$ gives rise to an infinite family of graphs satisfying conditions \ref{thm:graph_1} to \ref{thm:graph_4} above. Let $\mathcal{G}=\mathcal{P}_1\square\ldots\square \mathcal{P}_r$ represent its unique prime factorization, and let $\{\mathcal{Q}_j\}_{j\in J}$ denote the collection of connected prime non-isomorphic rigid graphs that are not divisors of $\mathcal{G}$. In other words, $\aut(\mathcal{Q}_j)={1}$, and $\mathcal{Q}_j\not\cong \mathcal{P}_i$ for any $j$ and $i$. Due to Lemma \ref{lem:rigidos_primos}, considering that $i=1,\ldots, r$ is a finite number of indices,  $\{Q_j\}_{j\in J}$ needs to be an infinite collection.

Let us define $\mathcal{G}_j=\mathcal{G}\square \mathcal{Q}_j$, and by selecting a vertex $q_j\in V(\mathcal{Q}_j)$, we can establish the identification $V=V\times \{q_j\}\subset  V(\mathcal{G})\times V(\mathcal{Q}_j)=V(\mathcal{G}_j)$. With this in place, we claim that the collection $\{\mathcal{G}_j\}_{j\in J}$ is an infinite set of non-isomorphic graphs satisfying conditions \ref{thm:graph_1} -- \ref{thm:graph_4} above.

Indeed, since both $\mathcal{G}$ and $\mathcal{Q}_j$ are finite and connected, it follows that $\mathcal{G}_j$ is so. Furthermore, when  $j\ne j'$, the prime decomposition of $\mathcal{G}_j$ and $\mathcal{G}_{j'}$ are different, hence $\mathcal{G}_j\not\cong \mathcal{G}_{j'}$. Additionally, as established in \cite[Theorem 6.10]{HIK}, $\aut(\mathcal{G}_j)=\aut(\mathcal{G})\times\aut(\mathcal{Q}_j)=G$, since condition \ref{thm:graph_3} holds  for $\mathcal G$ and $\mathcal{Q}_j$ is rigid. This implies that condition \ref{thm:graph_3} also holds for $\mathcal{G}_j$, with automorphisms acting coordinate-wise.

Consequently, for any $\psi\in\aut(\mathcal{G}_j)$, we have $\psi=(\psi_1,\operatorname{Id})\in \aut(\mathcal{G})\times\aut(\mathcal{Q}_j)$, leading to $\psi(V)=\psi_1(V)\times\{q_j\}=V$ and therefore \ref{thm:graph_1} holds. Moreover, the restriction $G\cong\aut(\mathcal{G}_j)\to \Sym(V)$ is precisely the restriction $G\cong\aut(\mathcal{G})\to \Sym(V)$, i.e.\ $\rho$, satisfying \ref{thm:graph_4}.

Finally, we must confirm condition \ref{thm:graph_2}. Since both $\mathcal{G}$ and $\mathcal{Q}_j$ are connected, each vertex in $\mathcal{G}$ and $\mathcal{Q}_j$ has a minimum degree of $1$. As vertex degrees add up in cartesian products, \cite[Proposition 5.1]{HIK}, it follows that every vertex in $\mathcal{G}_j$ has a degree of at least $2$.
\end{proof}

\section{Acyclic finite spaces and permutation representations}\label{sect:acyclic}

In this section, we establish an analogue of Theorem \ref{thm:graph_realizing_permutation} for acyclic minimal finite spaces. A finite space $X$ is usually represented by its \textit{Hasse diagram}, a directed graph in which the vertices correspond to the points of $X$. An edge $(x, y)$ exists if $x \lneq y$ and there is no $z \in X$ such that $x \lneq z \lneq y$. Then, we say that $y$ covers $x$ or that $x$ is covered by $y$. A \textit{minimal finite space} refers to a finite space without beat points, i.e., points that are either covered by, or cover, a single point. We refer the reader to \cite[Section 1.3]{barmak} for more details.

Recall that for a given graph $\mathcal{G}$, there exists a finite space $(P_\mathcal{G},\leq_\mathcal{G})$ known as the \textit{incidence poset} of $\mathcal{G}$, which is defined as follows (see  \cite{order_poset}): $P_\mathcal{G} = V(\mathcal{G}) \cup E(\mathcal{G})$ as a set, and, for any two distinct elements $a$ and $b$ in $P_\mathcal{G}$, the relation $a \leq_\mathcal{G} b$ holds if and only if $a$ is a vertex and $b$ is an edge incident at $a$, or if $a$ and $b$ are the same vertex or edge. In other words, $(P_\mathcal{G},\leq_\mathcal{G})$ is a height $1$ finite space  (i.e.\ the longest chain has $2$ elements) constructed over $V(\mathcal{G}) \cup E(\mathcal{G})$, with the minimal (resp. maximal) elements in $(P_\mathcal{G},\leq_\mathcal{G})$ representing the vertices (resp.\ edges) of $\mathcal G$. Consequently, given that the automorphisms of a finite space must preserve the minimality (resp.\ maximality) of elements, while graph automorphisms must preserve incidence, there exists an obvious isomorphism $\aut(\mathcal{G}) \cong \aut(P_\mathcal{G})$. In this setting, Theorem \ref{thm:graph_realizing_permutation} can be reformulated as:

\begin{theorem}\label{thm:finite_realizing_permutation}
Let $G$ be a finite group, and let $\rho:G\to \Sym(V)$ be a not necessarily faithful  permutation representation, where $V$ is a finite non-empty set. Then, there exist infinitely many non-isomorphic minimal finite spaces $X$ satisfying the following conditions:
\begin{enumerate}[label={\rm (\roman{*})}]
    \item\label{thm:finite_1} $V$ is a discrete subspace within $X$, all the elements of $V$ are minimal, and $V$ is invariant under the $\aut(X)$-action;
    \item\label{thm:finite_2} The height of $X$ is $1$;
    \item\label{thm:finite_3} $G \cong \aut(X)$;
    \item\label{thm:finite_4} The restriction $G\cong\aut(X)\to \Sym(V)$ is precisely $\rho$.
\end{enumerate}
\end{theorem}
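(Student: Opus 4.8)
The plan is to transport Theorem \ref{thm:graph_realizing_permutation} to the poset setting through the incidence poset construction described above. First I would apply Theorem \ref{thm:graph_realizing_permutation} to the given permutation representation $\rho$, producing an infinite family $\{\mathcal{G}_j\}_{j \in J}$ of pairwise non-isomorphic finite connected graphs satisfying \ref{thm:graph_1}--\ref{thm:graph_4}, and then set $X_j := P_{\mathcal{G}_j}$. The claim is that $\{X_j\}_{j\in J}$ is the desired family.

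Most of the conditions translate immediately using the observations preceding the statement. Each $X_j$ has height $1$ by construction, giving \ref{thm:finite_2}. The isomorphism $\aut(\mathcal{G}_j) \cong \aut(P_{\mathcal{G}_j})$ combined with \ref{thm:graph_3} yields $G \cong \aut(X_j)$, so \ref{thm:finite_3} holds; and since this isomorphism is compatible with the action on vertices, conditions \ref{thm:graph_1} and \ref{thm:graph_4} carry over to \ref{thm:finite_1} and \ref{thm:finite_4}. The discreteness of $V$ in \ref{thm:finite_1} is automatic: the points of $V$ are vertices of $\mathcal{G}_j$, hence minimal and pairwise incomparable in $P_{\mathcal{G}_j}$. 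For non-isomorphy and infinitude, I would note that a poset isomorphism $P_{\mathcal{G}_j} \cong P_{\mathcal{G}_{j'}}$ preserves minimal and maximal elements and the covering relation, hence recovers a graph isomorphism $\mathcal{G}_j \cong \mathcal{G}_{j'}$; so distinct isomorphism types among the $\mathcal{G}_j$ yield distinct isomorphism types among the $X_j$.

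The step requiring genuine attention, and where condition \ref{thm:graph_2} is indispensable, is confirming that each $X_j$ is a \emph{minimal} finite space, i.e.\ has no beat points. The maximal elements of $P_{\mathcal{G}_j}$ are edges, each of which covers exactly its two endpoints in a simple graph, so no edge is a down beat point; being maximal, none is an up beat point either. The minimal elements are vertices, which cover nothing and so cannot be down beat points. A vertex $v$ would be an up beat point exactly when it is covered by a single element, i.e.\ when it lies on a unique edge, meaning $\deg(v) = 1$; but \ref{thm:graph_2} ensures every vertex satisfies $\deg(v) \geq 2$. Hence no element of $P_{\mathcal{G}_j}$ is a beat point, and $X_j$ is minimal. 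This is the whole reason the degree-$\geq 2$ hypothesis was built into Theorem \ref{thm:graph_realizing_permutation}: it is precisely what is needed to guarantee minimality of the incidence posets, which is otherwise the only non-formal ingredient in the passage from graphs to finite spaces.
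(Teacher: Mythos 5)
Your proposal is correct and follows exactly the route the paper intends: the theorem is presented there as a reformulation of Theorem \ref{thm:graph_realizing_permutation} via the incidence poset, and you have simply filled in the details of that translation, including the one non-formal point (that the degree-$\geq 2$ condition is what rules out up beat points at the vertices and hence makes $P_{\mathcal{G}_j}$ a minimal finite space). All the verifications are accurate.
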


\begin{remark}
 For our specific needs, it is crucial to focus on minimal finite spaces. This is justified by the fact that  homotopy equivalences between minimal finite spaces are homeomorphisms, \cite[Corollary 1.3.7]{barmak}. Thus,  when dealing with minimal finite spaces, the group of homeomorphisms $\aut(X)$ and the group of self-homotopy equivalences $\E(X)$ are isomorphic.
\end{remark}

In the remainder of this section, we will establish a version of Theorem \ref{thm:finite_realizing_permutation} for acyclic spaces with arbitrarily large height.

\begin{theorem}\label{thm:finite_contractiible_realizing_permutation}
Let $n \geq 5$ be a positive integer. Consider $G$ a finite group, and a permutation representation $\rho:G\to \Sym(V)$, where $V$ is a finite non-empty set. Then there exist infinitely many non-isomorphic minimal finite spaces $X:=(X,\leq)$ that satisfy the following conditions:
\begin{enumerate}[label={\rm (\roman{*})}]
    \item\label{thm:finite_con_0} Every minimal element in $X$ is part of a chain of maximal length $n$. In particular, $X$ has height $n$;
    \item\label{thm:finite_con_1} The set $V$ forms a discrete subspace of $X$, with all elements of $V$ considered minimal. Additionally $V$ remains invariant under the $\aut(X)$-action;
    \item\label{thm:finite_con_2} The space $X$ is weakly contractible, thus acyclic;
    \item\label{thm:finite_con_3} We have $G \cong \E_{free}(X)=\E(X)=\aut(X)$;
    \item\label{thm:finite_con_4} The restriction $G\cong\aut(X)\to \Sym(V)$ corresponds to the representation $\rho$.
\end{enumerate}
\end{theorem}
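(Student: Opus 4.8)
The plan is to deduce this theorem from Theorem~\ref{thm:finite_realizing_permutation} by a single \emph{non-Hausdorff join} that simultaneously raises the height to $n$ and kills the homology, while leaving the automorphism group untouched. Recall that the non-Hausdorff join $P\circledast Q$ of two finite spaces is the poset on the disjoint union $P\sqcup Q$ whose order restricts to the given orders on $P$ and $Q$ and satisfies $p<q$ for every $p\in P$ and $q\in Q$; its order complex is the simplicial join of the order complexes of $P$ and $Q$, and the length of a longest chain is additive, so $\mathrm{ht}(P\circledast Q)=\mathrm{ht}(P)+\mathrm{ht}(Q)+1$ (see \cite{barmak}). Let $X_0$ be one of the infinitely many minimal finite spaces of height $1$ furnished by Theorem~\ref{thm:finite_realizing_permutation}, so that $V$ sits inside the minimal elements of $X_0$ as a discrete, $\aut(X_0)$-invariant subspace, $\aut(X_0)\cong G$, and the restriction to $V$ is $\rho$. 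Then I would set $X:=X_0\circledast W$, where $W$ is a rigid, acyclic, minimal finite space of height $n-2$ possessing at least two minimal and at least two maximal elements; the existence of such a $W$ for every $n\ge 5$ is the heart of the matter and is discussed last.

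Granting $W$, conditions \ref{thm:finite_con_0}--\ref{thm:finite_con_4} follow by routine poset bookkeeping. For \ref{thm:finite_con_0}: a longest chain of $X_0\circledast W$ is the concatenation of a longest chain of $X_0$ ($2$ elements) with a longest chain of $W$ ($n-1$ elements), giving $n+1$ elements, so $\mathrm{ht}(X)=n$; and since each minimal element $v$ of $X_0$ is covered by some maximal element $e$, which in turn lies below every minimal element of $W$, the chain from $v$ through $e$ and up a longest chain of $W$ has length $n$. For \ref{thm:finite_con_1}, the minimal elements of $X$ are exactly those of $X_0$, so $V$ remains discrete and minimal, and its $\aut(X)$-invariance is inherited once \ref{thm:finite_con_3}--\ref{thm:finite_con_4} are known. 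Minimality of $X$ is checked at the interface: because $X_0$ has at least two maximal elements and $W$ at least two minimal ones, each maximal element of $X_0$ is now covered by $\ge 2$ elements and each minimal element of $W$ now covers $\ge 2$ elements, so no new beat points appear; all other elements keep their covers inside $X_0$ or $W$, which are already beat-point free.

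For \ref{thm:finite_con_2}, both $X_0$ and $W$ are connected, hence $X_0\circledast W$ is simply connected (a non-Hausdorff join of nonempty connected spaces is simply connected), while the join formula for reduced homology gives $\tilde H_\ast(X_0\circledast W)=0$ because every cross term involves $\tilde H_\ast(W)=0$; by Hurewicz and Whitehead, $X$ is weakly contractible, thus acyclic. For \ref{thm:finite_con_3}--\ref{thm:finite_con_4}, I would characterize $W$ intrinsically inside $X$: since $X_0$ is the incidence poset of a graph on at least three vertices, each of its maximal elements lies above exactly two minimal elements, so $W$ is precisely the set of points of $X$ lying above \emph{all} minimal elements. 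Hence any automorphism of $X$ preserves both $W$ and $X_0$, restricts to automorphisms of each, and (as $W$ is rigid) is determined by its restriction to $X_0$; conversely every element of $\aut(X_0)$ extends by the identity on $W$. Therefore $\aut(X)\cong\aut(X_0)\times\aut(W)\cong G$, acting on $V$ through $\aut(X_0)$, i.e.\ via $\rho$. Since $X$ is a minimal finite space, $\E(X)=\aut(X)$ by \cite[Corollary 1.3.7]{barmak}, and the coincidence with $\Ef(X)$ is inherited as in Theorem~\ref{thm:finite_realizing_permutation}; this yields \ref{thm:finite_con_3}. Finally, distinct choices of $X_0$ give non-homeomorphic $X$, because $X_0$ is recovered from $X$ as the complement of the intrinsically defined $W$; as Theorem~\ref{thm:finite_realizing_permutation} provides infinitely many non-isomorphic $X_0$, we obtain infinitely many non-isomorphic $X$.

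The main obstacle is the construction of the blocks $W$: a rigid, acyclic, minimal finite space of each prescribed height $m=n-2\ge 3$ with at least two minimal and two maximal elements. A minimal finite space with these properties that is not a point cannot be collapsible, so this amounts to producing rigid, non-collapsible, acyclic simplicial complexes of each dimension $m\ge 3$ whose face posets have no up-beat points, i.e.\ in which every non-maximal simplex is a face of at least two simplices of one higher dimension. I would build these from a non-collapsible acyclic $2$-complex of dunce-hat or Bing's-house type, first \emph{rigidifying} it (destroying all simplicial symmetries without introducing free faces or altering the homology) and then raising the dimension by non-Hausdorff joins with rigid minimal finite spaces of controlled height, of the kind already available from the rigid graphs of Lemma~\ref{lem:rigidos_primos}. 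The delicate points---keeping the automorphism group trivial at \emph{every} height and tuning the height to hit each value $m\ge 3$ exactly, all while preserving the beat-point-free property---are where the real work lies; analogous rigid weakly contractible building blocks have been constructed by Chocano, Morón and Ruiz del Portal \cite{chocano1}, and I would adapt their models here.
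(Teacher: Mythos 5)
Your overall architecture coincides with the paper's: take a height-$1$ space $X_0$ from Theorem~\ref{thm:finite_realizing_permutation} and form the non-Hausdorff join $X=X_0\circledast W$ with a rigid, acyclic, minimal finite space $W$ of height $n-2$, then use additivity of heights, the join formula for homology, and $\aut(X_0\circledast W)\cong\aut(X_0)\times\aut(W)$. The bookkeeping is essentially sound --- the chain count for condition (i), the minimality check at the interface, and the intrinsic characterization of $W$ as the set of points lying above all minimal elements all work. (One remark: your hypothesis that $W$ have at least two minimal and two maximal elements is automatic for a minimal finite space with more than one point, since a finite space with a minimum or maximum is contractible and a contractible minimal finite space is a point.)

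There is, however, a genuine gap exactly where you place it: you never construct $W$. The existence, for every $m\geq 3$, of a rigid, weakly contractible, beat-point-free finite space of height exactly $m$ is the actual content of the paper's proof, and your sketch does not close it. The rigidification of a dunce-hat or Bing's-house face poset is only asserted, and your device for raising the height --- non-Hausdorff joins with incidence posets of rigid graphs --- adds $2$ to the height at each step (since $\hght(A\circledast B)=\hght(A)+\hght(B)+1$ with $\hght(B)=1$), so from a fixed seed it reaches only one parity; joining with a single point to adjust the parity creates beat points. The paper supplies the missing blocks explicitly: it wedges copies of Rival's space $L_1$ (weakly contractible, minimal, $\aut(L_1)\cong\Z/2$, height $2$) along carefully chosen maximal points not fixed by the nontrivial automorphism, obtaining rigid weakly contractible minimal spaces $W_k$ of height $2k$ and $\widetilde W_k$ of height $2k-1$ for all $k\geq 2$; the choice of gluing points is what kills the residual $\Z/2$ symmetries, and the two families cover both parities (whence the hypothesis $n\geq 5$). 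Until you produce such a family --- or import it correctly from \cite{chocano1} with the height and rigidity statements verified --- the theorem is not proved.
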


To prove Theorem \ref{thm:finite_contractiible_realizing_permutation}, we need to introduce several lemmas. The subsequent lemma encapsulates two fundamental properties that follow straightforwardly from the continuity of the inverse function $f^{-1}$ when $f$ is a homeomorphism.

\begin{lemma}\label{facts_automorphism_finite_spaces}
Let $X$ be a finite space, and let $f$ be a homeomorphism of $X$. Then the following hold:
\begin{enumerate}[label={\rm (\roman{*})}]
\item \label{fact1_automorphism_fintie_spaces}
 $x_1 \leq x_2$ if and only if $f(x_1) \leq f(x_2)$, for all $x_1,x_2 \in X$;

\item \label{fact2_automorphism_finite_spaces}  $x_0 \leq x_1 \leq \dots \leq x_n$ is a chain (resp.\ maximal chain) in $X$ if and only if $f(x_0) \leq f(x_1) \leq \dots \leq f(x_n) $ is a chain (resp.\ maximal chain) in $X$ as well.
\end{enumerate}
\end{lemma}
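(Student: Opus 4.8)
The plan is to deduce both statements directly from the categorical isomorphism between $T_0$ Alexandroff spaces and posets recalled in the introduction, under which continuous maps correspond precisely to order-preserving maps. The single guiding observation is that, since $f$ is a homeomorphism, both $f$ and its inverse $f^{-1}$ are continuous, and hence \emph{both} are order-preserving.

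For part (i), I would first note that continuity (order-preservation) of $f$ gives the implication $x_1 \leq x_2 \Rightarrow f(x_1) \leq f(x_2)$. For the reverse implication I would apply the identical reasoning to $f^{-1}$: assuming $f(x_1) \leq f(x_2)$, order-preservation of $f^{-1}$ yields $f^{-1}(f(x_1)) \leq f^{-1}(f(x_2))$, that is $x_1 \leq x_2$. This establishes the biconditional, which is merely a restatement of the fact that a homeomorphism corresponds to an order isomorphism.

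For part (ii), the chain assertion is immediate from part (i) applied to each consecutive pair $x_{i-1} \leq x_i$: since $f$ is a bijection, $x_0 \leq \dots \leq x_n$ is a chain exactly when $f(x_0) \leq \dots \leq f(x_n)$ is. For the maximal-chain refinement I would argue that maximality is preserved by transporting any hypothetical refinement across $f$. Were $f(x_0) \leq \dots \leq f(x_n)$ not maximal, some $y \in X$ distinct from all the $f(x_i)$ could be inserted between two consecutive terms (or appended at an end); applying $f^{-1}$ together with part (i) would then insert the point $f^{-1}(y)$, distinct from every $x_i$ by injectivity of $f$, into the original chain, contradicting its maximality. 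The symmetric argument applied to $f^{-1}$ yields the converse.

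I do not expect a genuine obstacle here: the statement is a formal consequence of $f$ being a homeomorphism, equivalently an order isomorphism of the associated poset. The only point requiring minor care is the maximal-chain case, where one must check that a refinement of the image chain corresponds, via the bijection $f^{-1}$, to an \emph{honest} refinement of the original chain; this is precisely where injectivity of $f$ is invoked.
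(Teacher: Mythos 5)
Your proposal is correct and follows exactly the route the paper indicates: the paper states the lemma without a written proof, remarking only that both facts ``follow straightforwardly from the continuity of the inverse function $f^{-1}$,'' which is precisely the observation (continuity $=$ order-preservation for both $f$ and $f^{-1}$) that you elaborate. Your careful treatment of the maximal-chain case, transporting a hypothetical refinement back through $f^{-1}$ and using injectivity, fills in the details the paper leaves implicit and is sound.
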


We recall that for finite spaces $X$ and $Y$, the \textit{non-Hausdorff join} $X \circledast Y$ is defined as the space with the underlying set being the disjoint union $X\sqcup Y$, and its partial order established by keeping the orderings within $X$ and $Y$, while declaring $x\leq y$ for each $x\in X$ and $y\in Y$. The following lemma enumerates several useful properties associated with this operation:

\begin{lemma}
Let $X$ and $Y$ be finite spaces  with more than one point. Then: \label{lem:non_Hausdorff_join}
\begin{enumerate}[label={\rm (\roman{*})}]
    \item If both $X$ and $Y$ are minimal, then $X \circledast Y$ is also minimal; \label{lem:non_Hausdorff_joincond1}
    \item If $Y$ (or $X$) is weakly contractible, then $X \circledast Y$ is also weakly contractible; \label{lem:non_Hausdorff_joincond2}
    \item  \label{lem:non_Hausdorff_joincond3}  The height of the non-Hausdorff join satisfies $$\hght(X \circledast Y) = \hght(X) + \hght(Y) +1 ;$$
    \item $\aut(X \circledast Y) \cong \aut(X) \times \aut(Y)$. \label{lem:non_Hausdorff_joincond4}
\end{enumerate}
\end{lemma}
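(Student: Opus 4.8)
The plan is to verify the four assertions separately, exploiting that $X\circledast Y$ is the ordinal sum in which every point of $X$ lies strictly below every point of $Y$; consequently a chain of $X\circledast Y$ is exactly a chain of $X$ followed by a chain of $Y$, and the minimal (resp.\ maximal) elements of $X\circledast Y$ are precisely those of $X$ (resp.\ of $Y$). Property \ref{lem:non_Hausdorff_joincond3} is then immediate: a chain of maximal length is obtained by concatenating chains of maximal length in $X$ and in $Y$, which together have $(\hght(X)+1)+(\hght(Y)+1)$ elements, giving $\hght(X\circledast Y)=\hght(X)+\hght(Y)+1$.

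For \ref{lem:non_Hausdorff_joincond4}, I would first observe that applying automorphisms coordinate-wise defines an injective homomorphism $\aut(X)\times\aut(Y)\to\aut(X\circledast Y)$. Surjectivity amounts to showing that every $f\in\aut(X\circledast Y)$ satisfies $f(X)=X$ and $f(Y)=Y$. To this end I compare the closed down-sets $D_p=\{q : q\le p\}$: for $x\in X$ one has $D_x\subseteq X$, whereas for $y\in Y$ one has $X\subseteq D_y$ and $y\in D_y$, so $|D_y|\ge |X|+1>|X|\ge |D_x|$. Since a homeomorphism of a finite space is an order isomorphism (Lemma \ref{facts_automorphism_finite_spaces}), it carries $D_p$ bijectively onto $D_{f(p)}$; hence no point of $X$ can be sent to a point of $Y$, which forces $f(X)\subseteq X$ and, by finiteness, $f(X)=X$ and $f(Y)=Y$. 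Then $f$ restricts to a pair $(f|_X,f|_Y)\in\aut(X)\times\aut(Y)$, proving the image is everything.

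For \ref{lem:non_Hausdorff_joincond1} I would use the auxiliary fact that a minimal finite space with more than one point has neither a maximum nor a minimum: were $m$ a minimum, any point $p$ covering $m$ would satisfy $\{q : q<p\}=\{m\}$, making $p$ a down beat point, a contradiction; the maximum case is dual. Granting this, I check that $X\circledast Y$ has no beat points. For $x\in X$, the set of points below $x$ is the same as in $X$, so $x$ is not a down beat point; its set of points strictly above is $\{q\in X : q>x\}\cup Y$, whose minimum, if it existed, would lie in $X$ when $x$ is non-maximal in $X$ (reducing to the minimality of $X$) and would be a minimum of $Y$ when $x$ is maximal in $X$ (impossible by the auxiliary fact). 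The argument for $y\in Y$ is dual, so $X\circledast Y$ is minimal.

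Finally, for \ref{lem:non_Hausdorff_joincond2} I would pass to order complexes. Because every chain of $X\circledast Y$ splits as a chain of $X$ concatenated with a chain of $Y$, the order complex $\mathcal{K}(X\circledast Y)$ is the simplicial join $\mathcal{K}(X)*\mathcal{K}(Y)$, whence $|\mathcal{K}(X\circledast Y)|\cong |\mathcal{K}(X)|*|\mathcal{K}(Y)|$. If $Y$ is weakly contractible then $|\mathcal{K}(Y)|$ is contractible, and the join of a non-empty space with a contractible one is contractible; McCord's weak homotopy equivalence $|\mathcal{K}(X\circledast Y)|\to X\circledast Y$ then yields weak contractibility of $X\circledast Y$. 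I expect the main obstacle to lie in \ref{lem:non_Hausdorff_joincond1} and \ref{lem:non_Hausdorff_joincond4}, both of which hinge on the order-theoretic rigidity of the ordinal-sum decomposition, established respectively through the no-extremum property of minimal spaces and through the down-set cardinality count.
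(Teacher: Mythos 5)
Your proof is correct, and its overall architecture matches the paper's: parts \ref{lem:non_Hausdorff_joincond2} and \ref{lem:non_Hausdorff_joincond3} are argued exactly as in the paper (order complex of the join is the simplicial join, then McCord; concatenation of longest chains). The differences are in \ref{lem:non_Hausdorff_joincond1} and \ref{lem:non_Hausdorff_joincond4}. For \ref{lem:non_Hausdorff_joincond4}, the paper also reduces to showing $f(X)=X$ and $f(Y)=Y$, but does so by a containment argument: if $f(x)\in Y$ for some $x\in X$, then $f(x)\leq f(y)$ for all $y\in Y$ forces $f(Y)\subsetneq Y$, contradicting bijectivity. Your down-set cardinality count $|D_y|\geq |X|+1>|D_x|$ is a genuinely different (and equally valid) mechanism; it is arguably more robust, since it immediately rules out \emph{any} point of $X$ landing in $Y$ without having to track where $f(Y)$ goes. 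For \ref{lem:non_Hausdorff_joincond1}, the paper disposes of the claim in one sentence, whereas you correctly identify the only delicate case --- a maximal point of $X$ (dually, a minimal point of $Y$), whose strict up-set in the join is exactly $Y$ --- and close it with the auxiliary observation that a minimal finite space with more than one point has no minimum or maximum. This uses the hypothesis that $X$ and $Y$ have more than one point, which is precisely why that hypothesis appears in the statement; your write-up makes that dependence explicit where the paper leaves it implicit.
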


\begin{proof}
     Since there are no beat points in $X$ and $Y$ by hypothesis, and the non-Hausdorff join operation makes every point of $Y$ cover each point of $X$, it follows that $X \circledast Y$ has no beat points, thus satisfying property \ref{lem:non_Hausdorff_joincond1}.

    Assertion \ref{lem:non_Hausdorff_joincond2} can be deduced from the observation that the non-Hausdorff join of $X$ and $Y$ is weakly equivalent to the join of the geometric realizations of the McCord complexes of $X$ and $Y$. The McCord complex of a finite space $X$ is a simplicial complex whose vertices represent the points of $X$, and the simplices are the non-empty chains of $X$ (for more details, see \cite[Section 1.4]{barmak}). So, if we denote the McCord complex of $X$ as $K(X)$ and its geometric realization as $\vert \K(X) \vert$, then, as outlined in \cite[Remark 2.7.2]{barmak} and the initial observations in \cite[Section 2.7]{barmak}, we have the following sequence of equivalences:
\[
X \circledast Y \simeq \vert \K(X \circledast Y) \vert = \vert \K(X) \ast \K(Y) \vert \cong \vert \K(X) \vert \ast \vert \K(Y) \vert,
\]
where $\ast$ indicates the join operation for either simplicial complexes or topological spaces. If $Y$ is weakly contractible, then, according to \cite[Corollary 1.4.15]{barmak}, the geometric realization $\vert \K(Y) \vert$ becomes contractible. Consequently, we conclude that $X \circledast Y \simeq \vert \K(X) \vert \ast \{y_0\}$, which is a contractible space.


  Property \ref{lem:non_Hausdorff_joincond3} is also obvious, since the longest chain of $X \circledast Y$ is the union of the longest chain of $X$ and the longest chain of $Y$.

  Finally, to establish \ref{lem:non_Hausdorff_joincond4}, we claim that the map $\Phi: \aut(X \circledast Y) \rightarrow \aut(X) \times \aut(Y)$, defined by associating $f$ with $(f|_X, f|_Y)$, is an isomorphism of groups. This is due to the fact that $f(X)=X$ and $f(Y)=Y$. Indeed, suppose there exists an element $x \in X$ such that $f(x) \in Y$. According to Lemma \ref{facts_automorphism_finite_spaces}.\ref{fact1_automorphism_fintie_spaces}, we have $f(x) \leq f(y)$ for all $y \in Y$. Since no element of $X$ is a covering point for any element of $Y$, we deduce that $f(Y) \subsetneq Y$ which leads to a contradiction as $f$ is a bijection. Hence, it must be that $f(X)=X$, and consequently, $f(Y)=Y$. This establishes that $\Phi$ is bijective and a homomorphism of groups.

\end{proof}

We now have all the necessary ingredients in place to establish:
\begin{proof}[Proof of Theorem \ref{thm:finite_contractiible_realizing_permutation}]

Consider a minimal finite space $X_1$ from Theorem \ref{thm:finite_realizing_permutation}. Then $X_1$ already satisfies conditions \ref{thm:finite_con_1}, \ref{thm:finite_con_3}, and \ref{thm:finite_con_4}, and it satisfies condition \ref{thm:finite_con_0} when $n=1$. Our objective is to modify $X_1$ in a way that it becomes weakly contractible while keeping its group of automorphisms unchanged.

To accomplish this, we will make use of $L_1$, the finite space introduced in \cite{Riv},  whose Hasse diagram is illustrated in Figure \ref{DiagL1}. It is weakly contractible but not contractible, \cite{ottina}, with $\aut(L_1) \cong \Z/2$ and $\hght(L_1)=2$. This space was key  in \cite{chocano1} to realize groups as the group of homeomorphisms as well as group of self-homotopy equivalences of Alexandroff spaces.

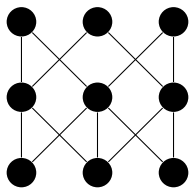
\begin{figure}[H]
 \begin{tikzpicture}
  [scale=1,auto=center,every node/.style={circle,fill=black}]
  \node (n1) at (2,0)  {};
  \node (n2) at (2,1)  {};
  \node (n3) at (2,2)  {};
  \node (n4) at (1,0)  {};
  \node (n5) at (1,1)  {};
  \node (n6) at (1,2)  {};
  \node (n7) at (0,0)  {};
  \node (n8) at (0,1)  {};
  \node (n9) at (0,2)  {};

  \foreach \from/\to in {n1/n2,n1/n5,n2/n3,n2/n6,n4/n5,n4/n2,n4/n8,n5/n7,n3/n5,n5/n9,n7/n8,n8/n9,n8/n6}
    \draw (\from) -- (\to);

\end{tikzpicture}
\caption{Hasse diagram of $L_1$} \label{DiagL1}
\end{figure}

Next, we define finite spaces $W_k$, introduced in \cite{chocano1}, recursively: set $W_1 = L_1$ and for $k > 1$,  $W_k = W_{k-1} \vee L_1$, where the wedge sum identifies a maximal point of $L_1$ that is not fixed by the non-trivial automorphism of $L_1$, with a point in the first position of a chain of maximum length in $W_{k-1}$ (which is also not fixed by the non-trivial automorphism of the copy of $L_1$ in which it lies). The Hasse diagram for $W_2$ is shown in Figure \ref{DiagW2}.

\begin{figure}[H]
 \begin{tikzpicture}
  [scale=1,auto=center,every node/.style={circle,fill=black}]

  \node (m1) at (2,0)  {};
  \node (m2) at (2,1)  {};
  \node (m3) at (2,2)  {};
  \node (m4) at (1,0)  {};
  \node (m5) at (1,1)  {};
  \node (m6) at (1,2)  {};
  \node (m7) at (0,0)  {};
  \node (m8) at (0,1)  {};
  \node (m9) at (0,2)  {};
  \node (m10) at (2,-1) {};
  \node (m11) at (2,-2) {};
  \node (m12) at (3,0) {};
  \node (m13) at (3,-1) {};
  \node (m14) at (3,-2) {};
  \node (m15) at (4,0) {};
  \node (m16) at (4,-1) {};
  \node (m17) at (4,-2) {};

  \foreach \from/\to in {m1/m2,m1/m5,m2/m3,m2/m6,m4/m5,m4/m2,m4/m8,m5/m7,m3/m5,m5/m9,m7/m8,m8/m9,m8/m6,m1/m10,m1/m13,m10/m11,m10/m14,m11/m13,m13/m14,m14/m16,m15/m16,m15/m13,m16/m12,m16/m17,m17/m13,m12/m10}
    \draw (\from) -- (\to);
\end{tikzpicture}
\caption{Hasse diagram of $W_2=L_1 \vee L_1$}\label{DiagW2}
\end{figure}
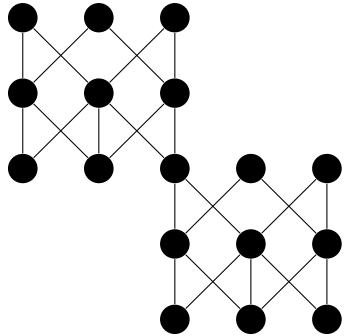

This construction ensures, in the first place,  that $W_k$ remains a weakly contractible minimal space: $W_k$ has no beat points as the wedge does not decrease the number of points covering or covered by each point, and $W_k =W_{k-1} \vee L_1$ with $W_1=L_1 \simeq \ast$,  so by induction we have $W_k \simeq \ast$.

In the second place,  for $k \geq 2$, we show that $\aut(W_k)= \{ 1 \}$ by induction. To prove that $\aut(W_2)=\{1\}$, observe that any $f \in \aut(W_2)$, must fix the gluing point, which is the only point in third position within chains of length $4$ in $W_2$. By Lemma \ref{facts_automorphism_finite_spaces}.\ref{fact1_automorphism_fintie_spaces} we deduce that minimal points not covered by the gluing point are sent to minimal points not covered by the gluing point, so $f$ takes each copy of $L_1$ to itself. By the choice of the gluing point, the restriction of $f$ to each $L_1$ is the identity and therefore $f$ is the identity.
    The same argument shows that if $f \in \aut(W_k)$, then $f(L_1)=L_1$ and $f|_{L_1}=\id_{L_1}$. Using the induction hypothesis, $\aut(W_{k-1})= \{1 \}$, we conclude $\aut(W_{k})= \{1 \}$.

Finally,  we claim that the height of $W_k$ is $2k$:  by induction suppose that $\hght(W_{k-1})=2(k-1)$ and note  that $\hght(W_k)=\hght(W_{k-1})+\hght(L_1)$, since the wedge identifies a minimal point with a maximal point.

Thus, by Lemma \ref{lem:non_Hausdorff_join}, $X= X_1 \circledast W_k$,  for $k \geq 2$, is a weakly contractible minimal finite space of height $\hght(X) =\hght(X_1) +\hght(W_k)+ 1 =2k+2$, whose automorphism group is $\aut(X_1) \times \aut(W_k) \cong \aut(X_1) \cong G$. Hence, it satisfies all the conditions from the theorem, as condition \ref{thm:finite_con_1} follows from the fact that $G$ acts as the identity on $W_k$ and, as previously on $X_1$.

It is worth noting that we have so far obtained minimal finite spaces with even heights. To extend our results to finite spaces with odd heights that fulfill the conditions of Theorem \ref{thm:finite_contractiible_realizing_permutation}, we introduce a modification. We replace the existing $W_k$ with a new finite space 
$\widetilde W_k = W_{k-1} \vee L_1$ for $k \geq 2$ where the identification is between a maximal point of $L_1$, which is not fixed by the non-trivial automorphism of $L_1$, and a point situated in the second position of a chain of maximal length within $W_{k-1}$ (likewise not fixed by the non-trivial automorphism of the copy of $L_1$). This is represented in Figure \ref{DiagTildeW2}.

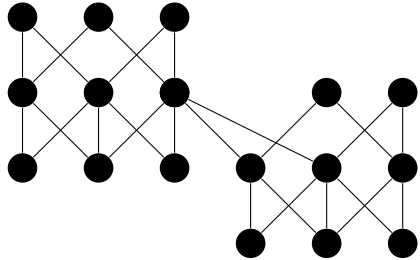
\begin{figure}[H]
 \begin{tikzpicture}
  [scale=1,auto=center,every node/.style={circle,fill=black}]

  \node (m1) at (2,0)  {};
  \node (m2) at (2,1)  {};
  \node (m3) at (2,2)  {};
  \node (m4) at (1,0)  {};
  \node (m5) at (1,1)  {};
  \node (m6) at (1,2)  {};
  \node (m7) at (0,0)  {};
  \node (m8) at (0,1)  {};
  \node (m9) at (0,2)  {};
  \node (m10) at (5,-1)  {};
  \node (m20) at (5,0)  {};
  \node (m30) at (5,1)  {};
  \node (m40) at (4,-1)  {};
  \node (m50) at (4,0)  {};
  \node (m60) at (4,1)  {};
  \node (m70) at (3,-1)  {};
  \node (m80) at (3,0)  {};
  \node (m90) at (2,1)  {};

  \foreach \from/\to in {m1/m2,m1/m5,m2/m3,m2/m6,m4/m5,m4/m2,m4/m8,m5/m7,m3/m5,m5/m9,m7/m8,m8/m9,m8/m6,m10/m20,m10/m50,m20/m30,m20/m60,m40/m50,m40/m20,m40/m80,m50/m70,m30/m50,m50/m90,m70/m80,m80/m90,m80/m60}
    \draw (\from) -- (\to);
\end{tikzpicture}
\caption{Hasse diagram of $\widetilde{W}_2=W_1 \vee L_1$}\label{DiagTildeW2}
\end{figure}

The resulting finite space $\widetilde{W}_k$ is minimal, weakly contractible, has no non-trivial automorphisms, and a height of $2k-1$. Then, $X =X_1 \circledast \widetilde W_k $, for $k \geq 2$,  has height $\hght(X) =\hght(X_1) +\hght(\widetilde W_k)+ 1 =2k+1$.

\end{proof}


\section{Realization of permutation modules} \label{sect:proof_main}


\begin{proof}[Proof of Theorem \ref{thm:main}]
Following the notation in the hypotheses, let  $M_i=\bigoplus\limits_{v\in V_i} (M_i)_v$ be the submodule decomposition on which $G$ acts as a permutation group on the summands as described by some permutation representation $\rho_i\colon G\to \Sym(V_i)$.

Using \cite[Corollary 1.4]{chocano1},  there exist topological spaces (that are finite and minimal), denoted as $Y_i^v$ for each $1 \leq i \leq n$  and $v \in V_i$, satisfying that $\E(Y_i^v)=\aut(Y_i^v)=\{1\}$ and $$\widetilde{H}_j(Y_i^v;\Z)=\begin{cases} 0, &j\ne i.\\ (M_i)_v, &j=i.
\end{cases}$$

Furthermore,  for each $1 \leq i \leq n$ and $v \in V_i$, a maximal point $y_i^v$ within  $Y_i^v$ is selected,  ensuring that it belongs to a chain of maximum length $l_i^v:=\hght(Y_i^v).$

Finally, for any $1 \leq i \leq n$, if  two elements $v, v' \in V_i$  are in the same $G$-orbit, we make the choice to set $Y_i^v = Y_i^{v'}$ and $y_i^v = y_i^{v'}.$

On the other hand, by Theorem \ref{thm:finite_contractiible_realizing_permutation}, there exists a minimal finite space $Z$, which is weakly contractible and satisfies the following conditions:
\begin{itemize}
    \item $\ell:=\hght(Z)> l_i^v$ for all $1 \leq i \leq n$ and $v \in V_i$, and every minimal element of $Z$ is in a chain of maximum length $\ell$;
    \item $V= \coprod \limits_{i=1}^n V_i$ is a discrete subspace of $Z$, invariant under the $\aut(Z)$-action,  where all elements of $V$ are minimal in $Z$;
    \item $G \cong \aut(Z) =  \E(Z); $
    \item  The restriction $G \cong \aut(Z) \rightarrow \Sym(V)$ is precisely $\rho=\bigoplus \limits_{i=1}^n \rho_i.$
\end{itemize}


Now we define the finite space $X$  as the following wedge sum
$$ X :=Z \bigvee_{\substack{v \in V_i \\ i=1}}^n Y_i^v $$
where  we identify the element $v \in V_i  \subset V  \subset  Z$ with the fixed maximal point $y_i^v \in Y_i^v$.

We need to show that $X$ is a minimal finite space,  $\widetilde{H}_i(X;\Z)=M_i$, $\E(X)=\aut(X) \cong \aut(Z) \cong G$ and  finally, that the action of $G$ on $M_i$ is equivalent to the action of $\E(X)$ on $H_i(X;\Z)$, for all $1 \leq i \leq n$.

Checking that $X$ has no beat points is straightforward.  Indeed, as  $Z$ and all of the $Y_i^v$ are minimal spaces,   by identifying $v \in V_i$ with $y_i^v \in Y_i^v$, neither the number of points covered by $v \sim y_i^v$ decreases nor does the number of points covering $v \sim y_i^v$.  Hence $X$ is a minimal finite space.

Furthermore, by direct application of the wedge axiom of homology, we obtain that, for $1 \leq i \leq n$, $\widetilde{H}_i(X; \Z)=\bigoplus \limits_{v \in V_i} (M_i)_{v}=M_i$.

Now, for any $f \in \aut(X)$ we claim that $f|_Z \in \aut(Z)$ and that this restriction completely determines $f$.

First, we are going to prove that $f(V)=V$ by induction argument. Recall that $l_i^v=\hght(Y_i^v)$ and let $$l=\max\{l_i^v :  v \in V_i, \,1 \leq i \leq n\}.$$ By construction  $l<\ell= \hght(Z)$. Define $J_k=\{ (i, v) : l_i^v\geq l-k\}$. We start by noting that if $(j, w)\in J_0$,  i.e.\ $l_j^w=l$, then $Y_j^w$ has maximum height, and $w \in V_j$ is in position $l+1$ of a chain of maximum length $l+\ell $.  Therefore, by Lemma \ref{facts_automorphism_finite_spaces} \ref{fact2_automorphism_finite_spaces},  $f(w) = w'$ where $w' \in V_{j'} $ for some $1 \leq j'\leq n$ with $(j', w') \in J_0$.
Now, assume that for $(j, w)\in J_{k-1}$, we have that $f (w) = w'$,  with $(j', w') \in  J_{k-1}$ for some $1 \leq j'\leq n.$ Let $(j, w)\in J_k\smallsetminus J_{k-1}$, that is  $l_{j}^w = l-k$ and
there is a maximal chain of length $l-k+\ell$,
\begin{equation}\label{eq:chain}x_0<x_1<\ldots<x_{l-k}=w<\ldots <x_{l-k+\ell}
\end{equation}
such that $x_{l-k}=w$ is the only point of this chain that belongs to $V$, and $x_{l-k+\ell}\in Z$ is a maximal point. Now,  since $\ell>l \geq l_i^v$, for $1\leq i\leq n$ and $v \in V_i$,  then  the subchain
$$f(w)=f(x_{l-k})<\ldots <f(x_{l-k+\ell})$$
must end in $Z$, thus $f(x_{l-k+\ell})$ is necessarily a maximal point in $Z$. Therefore, since $\ell=\hght(Z)$, there exist $l-k\leq r < l-k+\ell$ and $1\leq j'\leq n$ such that $f(x_r) = w'\in V_{j'}$.  We have to prove that $r={l-k}$, i.e. $f(w)=w'$, and $(j', w') \in J_k\smallsetminus J_{k-1}.$ Now,   $w' \in V_{j'}$ is in a maximal chain of maximal possible length $l_{j'}^{w'} + \ell$
$$x'_0<x'_1<\ldots<x'_{l_{j'}^{w'}}=w'<\ldots <x'_{l_{j'}^{w'} + \ell}$$
while it is also in the maximal chain
$$
f(x_0)<f(x_1)<\ldots<f(x_r)=w'= x'_{l_{j'}^{w'}}<\ldots <x'_{l_{j'}^{w'} + \ell}$$
of length $r+\ell$ and since $l_{j'}^{w'} + \ell$ was the maximal possible length for a chain containing $w'$, we get that $l_{j'}^{w'}\geq r \geq l-k$. Suppose that $r > l-k$,  that is  $ (j', w')\in J_{k-1}$. Then, by the induction hypothesis there exists $ (j'', w'')\in J_{k-1}$ such that $f(w'')=w'$. Since $f$ is a homeomorphism, $w''=x_r$ and therefore $x_r\in V$, while $x_{l-k}=w$ was the only point of the chain \eqref{eq:chain} belonging to $V$, which contradicts $r>l-k$. Hence $r=l-k$, and by a similar argument $l_{j'}^{w'}= l-k$, that is $(j', w') \in J_k \smallsetminus J_{k-1}$
and $f(V) = V$ as claimed.

We now prove that $f(Z)=Z$. If $z \in Z \smallsetminus V$ is a minimal point of $X$ (i.e., a minimal point of $Z$ that is not in $V$), then $z$ is the first element in a maximal chain of length $\ell$. By Lemma \ref{facts_automorphism_finite_spaces} \ref{fact2_automorphism_finite_spaces}, $f(z)$ is also the first element in a maximal chain of length $\ell$. This means $f(z) \notin Y_i^v$ for all $i$ and $v \in V$, as if that was the case $f(z)$ would be the first element in a maximal chain containing a point $w \in V$, because $\hght(Y_i^v) <\ell$ and a chain starting in some $Y_j^w$ and ending in $Z$ must pass through $V$. We would then get $f(z) \leq w$ and since $f(V)=V$, by Lemma \ref{facts_automorphism_finite_spaces} \ref{fact1_automorphism_fintie_spaces} $z \leq v$ for some $v \in V$, which contradicts $v$ being minimal in $Z$. Thus $f(z)\in Z$. Since every point in $Z$ covers either a point in $V$ or a point in $Z\smallsetminus V$ which is minimal in $X$, and all these are mapped into $Z$, then $f(Z)=Z$ by \ref{facts_automorphism_finite_spaces} \ref{fact1_automorphism_fintie_spaces}.

We now prove that $\aut(X) \cong \aut(Z)$. Since $f(Z)=Z$, and the restriction map $\aut(Z) \rightarrow \Sym(V)$ is $\rho$, we have that $f(V_i)=V_i$ for each $i$. Take $v \in V_i \subset V$ for some $i$ and assume $f(v)=w \in V_i$, that is, $v$ and $w$ are in the same $G$-orbit. Recall that $v$ is identified with a fixed maximal point $y_i^v \in Y_i^v$ and $w$ is identified with a fixed maximal point $y_i^w \in Y_i^w$, but since they are in the same $G$-orbit, we made the choice to set $(Y_i^v, y_i^v) = ( Y_j^w, y_j^w)$. Then, $f(y_i^v)=y_i^w = y_i^v$ and all the points $y \in Y_i^v$ such that $y \leq y_i^v$ are sent to $Y_i^w= Y_i^v$. Since $f(Z)=Z$, by the connectivity of $Y_i^v$ we have that $f(Y_i^v)=Y_i^w= Y_i^v$. Then, $f|_{Y_i^v}$ is an automorphism of $Y_i^v$ which is the identity as $\aut(Y_i^v)= \{ 1 \}$. This is valid for each $1 \leq i \leq n$, therefore the map $f \in \aut(X)$ is completely determined by its restriction to $Z$. Thus, $\aut(X) \cong \aut(Z)$.

Furthermore, since the action of $G \cong \aut(Z)$ on $Z$ permutes the points of $V$ according to the representation $\rho=\bigoplus \limits_{i=1}^n\rho_i:G \rightarrow \Sym(V)$ and the automorphisms of $X$ are determined by their restriction to $Z$, then the action of $G \cong \aut(X)$ on $X$ permutes the subspaces $Y_i^v$, $v \in V_i$, as prescribed by the representation $\rho_i:G \rightarrow \Sym(V_i)$ for each $i$ such that $1 \leq i \leq n$. Therefore, since $\widetilde H_\ast(Y_i^v; \Z)=(M_i)_v$ and $H_i(X; \Z) \cong \bigoplus \limits_{v \in V_i} H_i(Y_i^v; \Z)$, we can see that $\E(X)$ acts on $H_i(X;\Z) \cong M_i= \bigoplus \limits_{v \in V_i} (M_i)_v$ by permuting the summands as described by the representation $\rho_i:G \rightarrow \Sym(V_i)$. This is precisely how $G$ acts on $M_i$, hence condition \ref{thm:main_3} is shown.

\end{proof}


\begin{thebibliography}{99}

\bibitem[Bar]{barmak} Barmak, Jonathan A. \emph{Algebraic topology of finite topological spaces and applications}. Lecture Notes in Mathematics, 2032. Springer, Heidelberg, 2011.

\bibitem[Bou]{Bou} Bouwer, I.Z. \emph{Section graphs for finite permutation groups}.
J.\ Combinatorial Theory \textbf{6} (1969), 378--386.

\bibitem[Car]{carlsson} Carlsson, Gunnar. \emph{A counterexample to a conjecture of Steenrod}, Invent. Math. \textbf{64}, (1981), 171–174.

\bibitem[CO]{ottina} Cianci, Nicolás; Ottina, Miguel. \emph{Smallest weakly contractible non-contractible topological spaces}. Proc. Edinburgh Math. Soc., 63(1):263–274, 2020.

\bibitem[CMR]{chocano}
Chocano, Pedro J.; Mor\'on, Manuel A.; Ruiz del Portal, Francisco. \emph{Topological realizations of groups in Alexandroff spaces}. Rev.\ R.\ Acad.\ Cienc.\ Exactas F{\'\i}s.\ Nat.\ Ser.\ A Mat.\ RACSAM \textbf{115} (2021), no.\ 1, Paper No.\ 25, 20 pp.

\bibitem[CMR1]{chocano1}
Chocano, Pedro J.; Mor\'on, Manuel A.; Ruiz del Portal, Francisco. \emph{On some topological realizations of groups and homomorphisms}. Trans.\ Amer.\ Math.\ Soc.\ \textbf{375} (2022), 8635--8649.

\bibitem[CMV]{CMV} Costoya, Cristina; M{\'e}ndez, David; Viruel, Antonio. \emph{Representability of permutation representations on coalgebras and the isomorphism problem}. Mediterr.\ J.\ Math.\ \textbf{17}, 157 (2020).

\bibitem[CMV1]{CMV1} Costoya, Cristina; M{\'e}ndez, David; Viruel, Antonio. \emph{Realisability problem in arrow categories}. Collect. Math.\ \textbf{71} (2020), 383--405.

\bibitem[CV]{CV} Costoya, Cristina; Viruel, Antonio. \emph{Every finite group is the group of self homotopy equivalences of an elliptic space}. Acta Mathematica \textbf{213} (2014), 49--62.

\bibitem[CV6]{CV6} Costoya, Cristina; Viruel, Antonio.
\emph{On the realizability of group actions}. Adv.\ Math.\ \textbf{336} (2018), 299--315.

\bibitem[Fru]{Frucht1} Frucht, Robert. \emph{Herstellung  von Graphen mit vorgegebener abstrakter Gruppe}.
Comp.\ Math., \textbf{6} (1939), 239--250.

\bibitem[HIK]{HIK} Hammack, Richard; Imrich, Wilfried; Klavžar, Sandi. \emph{Handbook of product graphs. Second edition. With a foreword by Peter Winkler}. Discrete Mathematics and its Applications (Boca Raton). CRC Press, Boca Raton, FL, 2011.

\bibitem[Kah]{kahn} Kahn, D. \emph{Realization problems for the group of homotopy classes of self-equivalences}, Math. Annal., \textbf{220}
(1976), 37–46.

\bibitem[Kar]{Kar} Karpilovsky, Gregory. \emph{Group representations. Vol. 4}. North-Holland Mathematics Studies, 182. North-Holland Publishing Co., Amsterdam, 1995.

\bibitem[Las]{lashof} Lashof, R. \emph{Problems in Differential and Algebraic Topology}. Seattle Conference on Algebraic Topology,
1963, Ann. of Maths. (2) 81 (1965), 565–591

\bibitem[May]{may} May, J. P. \emph{Finite spaces and larger contexts}. Unpublished book, 2016.

\bibitem[Riv]{Riv} Rival, I. \emph{A fixed point theorem for finite partially ordered sets}. J. Combin. Theory A 21, pages 309–318, 1976.

\bibitem[Sch]{order_poset}
Schnyder, Walter. \emph{Planar graphs and poset dimension}.
Order \textbf{5} (1989), 323--343.


\end{thebibliography}
\end{document}